\DeclareMathOperator*{\inte}{int}
\DeclareMathOperator*{\Ang}{Ang}
\DeclareMathOperator*{\ang}{ang}
\newtheorem{thm}{Theorem}[section]
\newtheorem{cor}[thm]{Corollary}
\newtheorem{lem}[thm]{Lemma}
\theoremstyle{definition}
\theoremstyle{remark}
\begin{document}

\title{Twofold Translative Tilings with Convex Bodies}

\author{Kirati Sriamorn}

\maketitle

\begin{abstract}
  Let $K$ be a convex body. It is known that, in general, if $K$ is a $k$-fold translative tile (for some positive integer $k$), then $K$ may not be a (onefold) translative tile. However, in this paper I will show that for every convex body $K$, $K$ is a twofold translative tile if and only if $K$ is a translative tile.
\end{abstract}

\bigskip

\textbf{Keywords} Multiple tiling $\cdot$ Twofold tiling $\cdot$ Convex body $\cdot$ Polytope $\cdot$ Translative tile $\cdot$ Lattice tile

\medskip

\textbf{Mathematics Subject Classification} 52C20 $\cdot$ 52C22

\section{Introduction}
Let $D$ be a connected subset of $\mathbb{R}^n$, and let $k$ be a positive integer. We say that a family of convex bodies $\{K_1,K_2,\ldots\}$ is a \emph{$k$-fold tiling of $D$}, if each point of $D$ which dose not lie in the boundary of any $K_i$, belongs to exactly $k$ convex bodies of the family.

Let $K$ be an $n$-dimensional convex body, and let $X$ be a discrete multisubset of $\mathbb{R}^n$. Denote by $K+X$ the family
\begin{equation*}
\{K+x:~x\in X\},
\end{equation*}
where $K+x=\{y+x:~y\in K\}$.
We say that the family $K+X$ is a \emph{$k$-fold translative tiling with $K$}, if $K+X$ is a $k$-fold tiling of $\mathbb{R}^n$. In particular, if $X=\Lambda$ is a lattice, then $K+\Lambda$ is called a \emph{$k$-fold lattice tiling with $K$}. We call $K$ a \emph{$k$-fold translative (lattice) tile} if there exists a $k$-fold translative (lattice) tiling with $K$. A onefold tiling (tile) is simply called a tiling (tile).

Let $P$ be an $n$-dimensional centrally symmetric polytope with centrally symmetric facets. A \emph{belt} of $P$ is the collection of its facets which contain a translate of a given subfacet ($(n-2)$-face) of $P$.

For the case of onefold tilings, Venkov \cite{venkov} and McMullen \cite{mcmullen} proved the following result.
 \begin{thm}\label{mcmullen thm}
 A convex body $K$ is a translative tile if and only if $K$ is a centrally symmetric polytope with centrally symmetric facets, such that each belt of $K$ contains four or six facets.
 \end{thm}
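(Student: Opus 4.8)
The plan is to prove the two implications separately, spending most effort on necessity, and to argue by induction on the dimension $n$ (the planar case $n=2$, where a translative tile must be a parallelogram or a centrally symmetric hexagon, serving as the base). So assume $K+X$ is a onefold translative tiling of $\mathbb{R}^n$. The first step is to show $K$ is a polytope. Since the tiling has multiplicity one, the interiors of distinct translates are disjoint, so for each nonzero $x\in X$ the contact set $K\cap(K+x)$ lies in $\partial K$ and is convex, hence is contained in a supporting hyperplane of $K$. Local finiteness forces only finitely many translates to meet the compact set $K$, so $\partial K$ is covered by finitely many flat convex pieces, and $K$ is a polytope with finitely many facets.

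Next I would recover central symmetry of $K$ and of its facets. Fixing a facet hyperplane $H$, I would argue that the facets lying in $H$, contributed by tiles on either side, tile $H$; each such tiling is a translative tiling of $H\cong\mathbb{R}^{n-1}$, so by the induction hypothesis every facet of $K$ is a centrally symmetric polytope. For central symmetry of $K$ itself I would invoke Minkowski's uniqueness theorem: the tiling balances the contact across opposite directions, matching each facet with area and outer normal $(a,u)$ to one with data $(a,-u)$, so $-K$ has the same facet data as $K$ and is therefore a translate of it. For the belt condition, given a subfacet $G$ let $L$ be the $(n-2)$-plane through the origin parallel to $G$ and project along $L$ onto a $2$-plane; the belt facets of $G$ project to the edges of the convex polygon $\pi(K)$, and the tiles meeting a neighborhood of $G$ project to a planar translative tiling by $\pi(K)$. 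The planar case then forces $\pi(K)$ to be a parallelogram or a centrally symmetric hexagon, so the belt has $4$ or $6$ facets.

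For sufficiency I would run Venkov's construction: using central symmetry I pair opposite facets, and for each pair take the translation vector $t_F$ carrying $F$ onto its antipodal facet. Letting $\Lambda$ be the group generated by these vectors, I would verify that $K+\Lambda$ is a tiling, building the tiling up one face-dimension at a time. The condition that every belt has $4$ or $6$ facets is exactly what makes the relations among the $t_F$ around each belt close up, so that the translates fit together with neither gap nor overlap around every face.

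The hardest part, and the reason the full statement is due to McMullen rather than only to Venkov (who worked with lattice tilings), is that a translative tiling need not be periodic or even facet-to-facet: a facet of one tile may be split among several neighbors, and $X$ need not be a lattice. The three local claims above, that the facets in $H$ tile $H$, that opposite facet data balance, and that the transverse picture around $G$ is a genuine planar tiling, must therefore be established without assuming periodicity or facet-to-facet contact. Securing these in the general translative setting is the main obstacle; once they are in place, the combinatorial conclusions and the lattice construction follow as sketched.
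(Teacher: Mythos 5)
You should first note that the paper does not prove this theorem at all: it is imported from Venkov and McMullen by citation, so your sketch has to be measured against their argument, whose local machinery the paper reproduces in Section 2 and adapts in Lemma \ref{main lem}. Your overall skeleton (polytope via flat contact sets, Minkowski's uniqueness theorem for central symmetry, a two-dimensional reduction for the belt condition, Venkov's lattice construction for sufficiency) is the right one, but two of the three ``local claims'' you rest it on are false as stated, not merely unproven. First, the facets of tiles lying in a fixed facet hyperplane $H$ need not tile $H$: stack unit cubes in vertical columns over the unit-square tiling of the horizontal hyperplane, shifting each column vertically by a different irrational amount; this is a translative tiling, yet a hyperplane $H$ containing the top facet of one cube crosses the interiors of the cubes in every other column, so the facets contained in $H$ cover only a sliver of $H$. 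What is actually true is local: the facet $F$ with outer normal $u$ is covered, with pairwise disjoint relative interiors, by \emph{pieces} of facets with normal $-u$ of neighbouring tiles. That is too weak to feed your induction hypothesis, so central symmetry of the facets needs its own argument (this is exactly the nontrivial content of Theorem \ref{gravin thm} in the $k$-fold setting); and for the same reason your Minkowski balancing of facet data $(a,u)\mapsto(a,-u)$ is not a facet-to-facet matching but requires an averaging count over large balls --- repairable, but a genuine step.

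Second, the projection step fails outright: projecting the whole tiling along the $(n-2)$-plane $L$ parallel to $G$ does not yield a planar tiling by $\pi(K)$, since all the tiles stacked in the $L$-direction project onto overlapping translates of $\pi(K)$ and every point of the $2$-plane is covered infinitely often; even restricting to tiles meeting a neighbourhood of $G$ does not help. The correct replacement --- and this is precisely the mechanism the present paper formalizes with $F_P^{\pm}(G,q)$, $\Ang_P(G,q)$ and Corollary \ref{touch cor}, then deploys in Lemma \ref{main lem} --- is to fix a generic point $q$ in a belt facet, chosen to avoid all translates of the non-belt facets, and to count the angles $\ang_{K+x}(G,q)$ of the finitely many tiles containing $q$ inside a small transverse $2$-disk: these angles must sum to $2\pi$ (to $4\pi$ in the twofold case), each is either $\pi$ or an interior angle of the centrally symmetric $2m$-gon obtained by projecting $K$, and the elementary estimates on sums of non-opposite angles of a $2m$-gon force $m\le 3$. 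Your sufficiency paragraph is likewise only a citation-level outline (one must also prove the group generated by the vectors $t_F$ is discrete and that local fitting around faces globalizes). In short: you correctly identified that the non-facet-to-facet, non-periodic setting is the crux, but the two reductions you propose to handle it are exactly the ones that break there, and replacing them by the local angle analysis is the substance of the Venkov--McMullen proof.
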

 Furthermore, a consequence of the proof of this result is that, every convex translative tile is also a lattice tile.
 In the case of general $k$-fold tilings, Gravin, Robins and Shiryaev \cite{gravin} proved that
\begin{thm}\label{gravin thm}
If a convex body $K$ is a $k$-fold translative tile, for some positive integer $k$, then $K$ is a centrally symmetric polytope with centrally symmetric facets.
 \end{thm}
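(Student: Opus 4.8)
The plan is to establish the three claimed properties of $K$ in sequence — that it is a polytope, that it is centrally symmetric, and that its facets are centrally symmetric — using two structural features of a $k$-fold tiling $K+X$ throughout. First, the family is locally finite: because the multiplicity equals the constant $k$ almost everywhere, $X$ has finite density and only finitely many translates meet any bounded set. Second, away from the locally finite union $\bigcup_{x}\partial(K+x)$, the multiplicity function $m(y)=\#\{x\in X:\,y\in\inte(K+x)\}$ equals $k$, so the jumps of the indicators across these boundaries must cancel exactly. These facts let me reduce global questions to the local geometry of finitely many overlapping translates, and I may assume $0\in X$.

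Step 1, that $K$ is a polytope, is where I expect the bulk of the work. Fix a point $p\in\partial K$ at which $\partial K$ is smooth with outer normal $u$, and write $\partial K$ near $p$ as the graph of a concave function $f$ over the supporting hyperplane. Crossing $\partial K$ the indicator $\mathbf 1_K$ jumps by $1$, so to keep $m\equiv k$ the remaining sum $\sum_{x\neq 0}\mathbf 1_{K+x}$ must compensate with a jump of $-1$ on that piece of $\partial K$; since its jump set lies in $\bigcup_{x\neq0}\partial(K+x)$, a positive-measure portion of $\partial K$ near $p$ must coincide with the boundary of some other translate $K+x$. As $\partial(K+x)$ is a translate of $\partial K$, two translated graphs of the same concave $f$ agreeing on a set of positive measure forces $f$ to be affine along suitable directions, i.e.\ $\partial K$ contains a flat piece through $p$. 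The obstacle is to upgrade this one-directional affineness to a full $(n-1)$-dimensional facet and, via local finiteness, to bound the number of distinct facet hyperplanes; boundedness and convexity then give that $K$ is a polytope.

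Step 2, central symmetry of $K$: once $K$ is a polytope, Minkowski's uniqueness theorem says it is centrally symmetric exactly when its surface area measure is even, i.e.\ each facet has a parallel opposite facet of equal area. This is the conceptual heart of the theorem, since facet areas alone are shared by every translate and carry no information; the tiling hypothesis must enter through the way the translates fit together. My approach would be to slice along a hyperplane $H$ with normal $u$ carrying a facet $F_u$: the exact cancellation of multiplicity jumps across $H$ forces the covering function of the $F_u$-facets arriving from below to agree almost everywhere on $H$ with that of the $F_{-u}$-facets arriving from above, and I would extract from this, together with the finite density of $X$, the equality $\mathrm{vol}_{n-1}(F_u)=\mathrm{vol}_{n-1}(F_{-u})$ for every facet normal $u$. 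Matching the induced densities of these two families in the general (non-lattice) discrete setting is the delicate point here.

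Step 3, central symmetry of the facets, I would prove by induction on $n$, the case $n=1$ being trivial. For a facet $F$ with normal $u$ and $H=\mathrm{aff}(F)$, the same jump-cancellation across $H$ shows that, for almost every $y\in H$, the translates of $F$ lying in $H$ together with the transverse cross-sections $H\cap(K+x)$ cover $y$ exactly $k$ times. Decoupling the contribution of the facets parallel to $H$ from that of the transverse cross-sections — whose covering count varies with $y$ — should exhibit an $(n-1)$-dimensional $k'$-fold translative tiling by translates of $F$ alone; the inductive hypothesis then makes $F$ a centrally symmetric polytope, and as $u$ ranges over all facet normals, every facet of $K$ is centrally symmetric. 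This decoupling, and the verification that the induced family is genuinely a multiple tiling of $H\cong\mathbb R^{n-1}$, is the main technical point of this step.
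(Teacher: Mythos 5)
You should first note a mismatch with the source: the paper does not prove this statement at all. Theorem \ref{gravin thm} is imported verbatim from Gravin, Robins and Shiryaev \cite{gravin}, and the present paper uses it as a black box in Lemma \ref{main lem}. So your proposal can only be compared with the argument of \cite{gravin}, whose broad strategy (jump cancellation of $\sum_x \mathbf{1}_{K+x}\equiv k$, polytopality first, then equal opposite facet areas via Minkowski's uniqueness theorem, then an analysis of facets inside each facet hyperplane) your outline does resemble. But precisely at the three places you yourself flag as ``the obstacle,'' ``the delicate point,'' and ``the main technical point,'' there are genuine gaps, and two of them are fatal as stated. One smaller point is actually fixable and you under-claim it: in Step 1, writing the two coinciding boundary pieces as a concave graph $f$ and a convex graph $h$ with $f-h$ concave, nonnegative (say), and vanishing on a set of positive measure, the zero set is the max-set of a concave function, hence convex, hence has nonempty interior; so $f=h$ on an open set and both are affine there. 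You get a full $(n-1)$-dimensional flat piece directly, not just ``one-directional affineness.''

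The two serious gaps are these. First, in Step 1, ``flat piece through every smooth boundary point'' does not imply ``polytope'': a planar convex body whose boundary is a countable union of segments with vertices accumulating at a point satisfies it and is not a polygon. You must bound the number of tangent hyperplanes, and local finiteness alone does not do this: for $k\geq 2$ the translate $K+x$ that cancels the jump may satisfy $\inte(K)\cap\inte(K+x)\neq\emptyset$, and then $\partial K\cap\partial(K+x)$ can contain flat contact pieces lying in many different hyperplanes --- the clean onefold fact that disjoint interiors force the contact set into a single separating hyperplane is unavailable, so finiteness needs a real argument that your sketch does not supply. Second, Step 3 rests on a false reduction: the translates of $F$ lying in $H=\mathrm{aff}(F)$ do \emph{not} form a constant-multiplicity tiling of $H$. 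Already in the onefold brick tiling of the plane, the top edges of the tiles below a horizontal line $H$ cover only the complement of the set of points straddled by tiles crossing $H$; the facet covering function on $H$ is genuinely non-constant. What jump cancellation across $H$ actually yields is an \emph{equal-coverings} identity --- the covering function of the $u$-facets from below equals that of the $(-u)$-facets from above, as functions on $H$ --- and extracting central symmetry of $K$ and of its facets from that identity requires a separate argument (this is how \cite{gravin} proceeds), not an induction on a lower-dimensional $k'$-fold tiling, which never materializes. As written, your Step 3 does not launch, and Step 2's density-matching inherits the same issue in the non-lattice setting.
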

 Moreover, they also showed that, every rational polytope $P$ that is centrally symmetric and has centrally symmetric facets must be a $k$-fold lattice tile, for some positive integer $k$. This result implies that there exists a polytope $P$ such that $P$ is a $k$-fold translative tile (for some $k>1$), but $P$ is not a translative tile. For example, the octagon shown in Fig. \ref{octagon} is a $7$-fold lattice tile, but is not a translative tile.

\begin{figure}[!ht]
  \centering
    \includegraphics[scale=1]{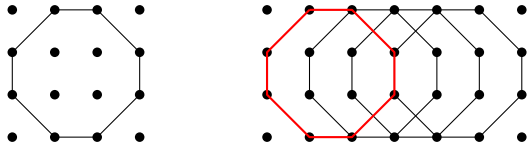}
   \caption{The octagon that is a $7$-fold lattice tile}\label{octagon}
\end{figure}

In this paper, I will prove the following surprising result:
\begin{thm}\label{main thm}
A convex body $K$ is a twofold translative tile if and only if $K$ is a onefold translative tile.
\end{thm}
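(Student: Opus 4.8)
The plan is to treat the two directions separately. The forward implication is immediate: given a onefold tiling $K+X$, take the multiset $X$ in which every translation vector is assigned multiplicity two; the resulting family covers each point off the boundaries exactly twice, so $K$ is a twofold tile. For the reverse implication I would first invoke Theorem \ref{gravin thm}: since $K$ is a twofold translative tile, it is automatically a centrally symmetric polytope with centrally symmetric facets. By Theorem \ref{mcmullen thm} it then suffices to prove that every belt of $K$ consists of exactly four or six facets, since this single combinatorial condition is all that separates such a polytope from a genuine (onefold, hence lattice) translative tile.

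The key reduction is to convert the belt condition into a two-dimensional problem. Fix a subfacet $G$ (an $(n-2)$-face) of $K$ and let $L$ be the linear subspace parallel to $\operatorname{aff}(G)$; the projection $\pi\colon\mathbb{R}^n\to\mathbb{R}^n/L\cong\mathbb{R}^2$ sends $K$ to a centrally symmetric $2m$-gon $\bar K=\pi(K)$ whose $2m$ edges are precisely the images of the $2m$ facets forming the belt of $G$ (these are exactly the facets $F$ with $L\subseteq\operatorname{lin}(F)$, i.e. those containing a translate of $G$). I would then show that in a neighborhood of the relative interior of $G$ only belt facets participate, so that the twofold tiling $K+X$ is locally cylindrical over $L$ and induces a twofold tiling of the plane by $\bar K$. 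This reduces the belt condition to the planar statement that a convex polygon which is a twofold translative tile must be a parallelogram or a centrally symmetric hexagon, i.e. $m\in\{2,3\}$.

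The heart of the argument is therefore this planar classification, which I expect to be the main obstacle. Let $\bar K$ be a centrally symmetric $2m$-gon admitting a twofold tiling $\bar K+Y$, with interior angles $\beta_1,\dots,\beta_{2m}$ satisfying $\sum_i\beta_i=(2m-2)\pi$ and $\beta_{i+m}=\beta_i$. The basic tool is the angle identity at any vertex $q$ of the arrangement of tile boundaries: summing, over all tiles containing $q$, the angle that each tile subtends at $q$ (namely $2\pi$ if $q$ is interior to the tile, $\pi$ if $q$ lies in a relatively open edge, the relevant interior angle if $q$ is a vertex, and $0$ otherwise) gives exactly $2\pi\cdot 2=4\pi$. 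Combining this vertex balance with the constraint that, along each of the $m$ edge directions, the parts of an edge left uncovered from one side must be matched from the other, I would argue that a belt of length $2m\ge 8$ forces an angle defect that multiplicity two cannot absorb, or else an infinite propagation of tiles incompatible with local finiteness. The delicate point, and the reason the onefold proof does not transfer verbatim, is that edges no longer pair off uniquely: a single edge may be shared among several neighboring tiles, so the matching argument must be made quantitative—tracking how much of each edge is covered from each side—rather than purely combinatorial. Once $m\in\{2,3\}$ is established for every belt, Theorem \ref{mcmullen thm} yields that $K$ is a onefold translative tile, completing Theorem \ref{main thm}.
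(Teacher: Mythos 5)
Your outline agrees with the paper's strategy at the top level (forward direction by doubling multiplicities; reverse direction via Theorem \ref{gravin thm} plus the belt condition and Theorem \ref{mcmullen thm}), but the step you use to get at the belt condition is broken. The claim that the twofold tiling $K+X$ is ``locally cylindrical over $L$'' and therefore ``induces a twofold tiling of the plane by $\bar K$'' is false in general: $X$ need not have any periodicity in the directions of $L$, so the translates meeting a neighborhood of $\operatorname{aff}(G)$ on one side of the belt can be staggered along $L$ arbitrarily relative to those on the other side, and more globally, every fiber $p+L$ of the projection $\pi$ meets infinitely many tiles, so each point of $\mathbb{R}^n/L$ is covered \emph{infinitely} often by $\bar K+\pi(X)$ --- there is no induced planar twofold tiling. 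What does survive is a purely pointwise statement: at a single point $q$ in the relative interior of $G$, chosen so that $q$ avoids all translates $K(G)+x$ of the non-belt facets (this choice has to be justified, though it is easy), the tiles through $q$ intersect a small ball in product-like sets, and summing the dihedral angles $\ang_{K+x}(G,q)$ around the $(n-2)$-plane $S(G,q)$ gives exactly $4\pi$. That local angle balance --- not a projected tiling --- is the tool the paper actually uses, following McMullen.

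Even granting the angle balance, your planar endgame is a sketch exactly where the difficulty sits. The balance plus the fact that five pairwise non-opposite belt angles sum to more than $4\pi$ disposes of $m\geq 6$ quickly, but the cases $m=4,5$ (and recall the paper's octagon is a $7$-fold lattice tile, so multiplicity bounds alone cannot rule these out) need three ingredients you do not supply: (a) your assumption that uncovered parts of an edge are ``matched from the other side'' presupposes that at each boundary point of a tile there is another tile touching it with \emph{disjoint interiors}; in a $k$-fold tiling interiors may overlap, so this is genuinely nontrivial --- it is the content of the paper's Lemma 3.2 and Corollary \ref{touch cor}; (b) the eliminations $\overset{\circ}{X}_K(q)=\emptyset$, $\dot{\partial}X_K(G,q)=\emptyset$, and the fact that $X$ cannot be a proper multiset, each proved by a triple-overlap angle count; and (c) the propagation step for $m\in\{4,5\}$: choosing $x_1\in X_K^+(G,q)$, $x_2\in X_{K+x_1}^+(G,q)$, $x_3\in X_{K+x_2}^+(G,q)$, the accumulated angle excess forces $\inte(K+x_3)$ to contain a point $q'$ of the parallel subfacet $E_K^+(G,q)$, and re-running elimination (b) at $q'$ yields $x_3\in\overset{\circ}{X}_K(q')=\emptyset$, a contradiction. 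Your alternative of ``an angle defect that multiplicity two cannot absorb, or else an infinite propagation of tiles'' gestures in this direction, but the actual mechanism is a finite two-step relocation to a parallel subfacet, and as written the proposal contains no argument at all for $m\in\{4,5\}$. The forward direction of your proposal is correct as stated.
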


In order to prove this result, I will modify the method used in \cite{mcmullen}. As an immediate consequence of Theorem \ref{main thm}, we have
\begin{cor}
A convex body $K$ is a twofold translative tile if and only if $K$ is a twofold lattice tile.
\end{cor}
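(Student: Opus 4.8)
The plan is to read the corollary off from Theorem \ref{main thm} together with the two facts already recorded in the introduction, so that the only real work lies in organizing the implications in the right order. One direction is immediate: every lattice tiling is in particular a translative tiling, so a twofold lattice tile is automatically a twofold translative tile. It therefore suffices to establish the converse.

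For the converse, suppose $K$ is a twofold translative tile. First I would apply Theorem \ref{main thm} to conclude that $K$ is a onefold translative tile. Next I would invoke the consequence of the Venkov--McMullen proof recorded just after Theorem \ref{mcmullen thm}, namely that every convex translative tile is a lattice tile; this upgrades $K$ to a onefold lattice tile, so there exists a lattice $\Lambda$ for which $K+\Lambda$ is a (onefold) tiling of $\mathbb{R}^n$.

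The final step is to manufacture a twofold lattice tiling out of this onefold lattice tiling, and here I would simply pass to an index-two superlattice. Fixing a basis $b_1,\dots,b_n$ of $\Lambda$, set $v=\tfrac12 b_1$ and $\Lambda'=\mathbb{Z}v+\mathbb{Z}b_2+\cdots+\mathbb{Z}b_n$, so that $\Lambda'$ is again a lattice containing $\Lambda$ with index two, and $\Lambda'=\Lambda\cup(\Lambda+v)$ as a disjoint union of cosets. Then
$$K+\Lambda'=(K+\Lambda)\cup(K+\Lambda+v)$$
is the superposition of two onefold tilings, the second being a translate of the first. Every point lying off all the boundaries of the translates in $\Lambda'$ is then covered exactly $1+1=2$ times, so $K+\Lambda'$ is a twofold lattice tiling and $K$ is a twofold lattice tile.

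I expect essentially no obstacle here, since the entire substantive content resides in Theorem \ref{main thm} and in the cited Venkov--McMullen refinement, both of which may be taken as given. The only point that demands a little care is the last step: one must check that superimposing two translates of a onefold tiling genuinely produces a twofold tiling in the sense of the paper's definition (the multiplicities add on the complement of the union of all facet boundaries, a set of full measure, where each subfamily contributes exactly one) and that the point set $\Lambda'$ used is indeed a full-rank lattice.
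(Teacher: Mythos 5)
Your proposal is correct and follows exactly the chain the paper intends when it calls the corollary an ``immediate consequence'' of Theorem \ref{main thm}: twofold translative tile $\Rightarrow$ translative tile (Theorem \ref{main thm}) $\Rightarrow$ lattice tile (the remark after Theorem \ref{mcmullen thm}) $\Rightarrow$ twofold lattice tile, plus the trivial converse. Your index-two superlattice construction, with the verification that multiplicities add off the boundaries, is the natural way to make the last implication explicit, and it correctly avoids the pitfall of doubling $\Lambda$ as a multiset (which would not be a lattice in the paper's sense).
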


\section{Some Notations}
Let $K$ be an $n$-dimensional convex body, and let $X$ be a discrete multisubset of $\mathbb{R}^n$ which contains the origin. Let $q$ be a point on the boundary $\partial K$ of $K$. We define
\begin{equation*}
X_K(q)=\{x\in X:~q\in K+x\},
\end{equation*}
\begin{equation*}
\overset{\circ}{X}_K(q)=\{x\in X_K(q):~q\in\inte(K+x)\},
\end{equation*}
and
\begin{equation*}
\partial X_K(q)=\{x\in X_K(q):~q\in\partial(K+x)\}.
\end{equation*}
In addition, we define
\begin{equation*}
\overline{\partial} X_K(q)=\{x\in \partial X_K(q):~\inte(K)\cap(K+x)=\emptyset\},
\end{equation*}
(see Fig. \ref{XK} for an example).

\begin{figure}[!ht]
  \centering
    \includegraphics[scale=.9]{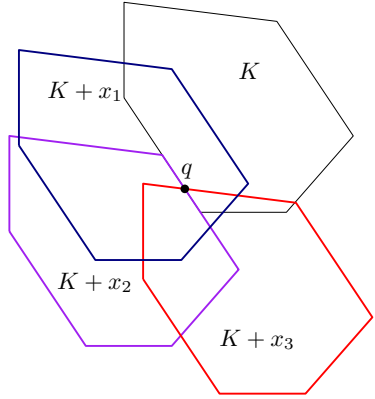}
   \caption{$X_K(q)=\{0,x_1,x_2,x_3\}$
   ,~$\overset{\circ}{X}_K(q)=\{x_1\}$,~$\partial X_K(q)=\{0,x_2,x_3\}$ and $\overline{\partial} X_K(q)=\{x_2\}$}
   \label{XK}
\end{figure}

Now suppose that $P$ is an $n$-dimensional centrally symmetric convex polytope with centrally symmetric facets. Let $G$ be a translate of a subfacet of $P$. Denote by $\mathcal{B}_P(G)$ the belt of $P$ determined by $G$. Let $q$ be a point that lies in a facet in $\mathcal{B}_P(G)$. Let $S(G,q)$ be the $(n-2)$-dimensional plane that contains the point $q$ and parallels to $G$. We define
\begin{equation*}
\dot{\partial} X_P(G,q)=\{x\in \overline{\partial} X_P(q):~P\cap(P+x)\subset S(G,q)\}.
\end{equation*}
Let $F$ be a subset of $\partial P$ containing the point $q$, we define
\begin{equation*}
\overline{\partial}X_P(G,F,q)=\{x\in\overline{\partial} X_P(q):~S(G,q)\cap F\cap(P+x)\neq F\cap(P+x)\},
\end{equation*}
(see Fig. \ref{XP}).
\begin{figure}[!ht]
  \centering
    \includegraphics[scale=.9]{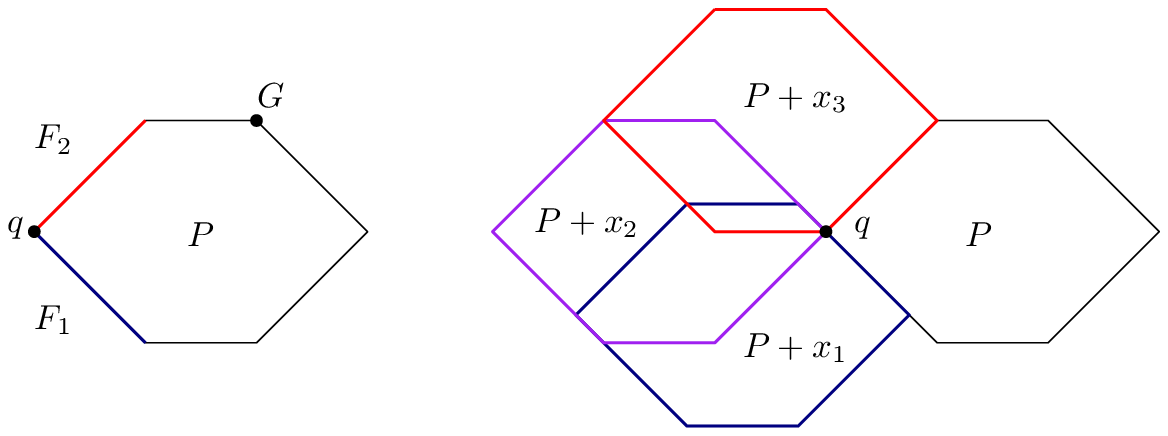}
   \caption{$\dot{\partial} X_P(G,q)=\{x_2\}$,~ $\overline{\partial}X_P(G,F_1,q)=\{x_1\}$
    and $\overline{\partial}X_P(G,F_2,q)=\{x_3\}$}
   \label{XP}
\end{figure}

Let $F_P(G,q)$ be the union of those facets in $\mathcal{B}_P(G)$ which contain $q$. It is easy to see that, $S(G,q)$ divides $F_P(G,q)$ into two parts. After choosing a direction, we may define these two parts $F_P^+(G,q)$ and $F_P^-(G,p)$ as shown in Fig. \ref{FP}.
Denote by $\Ang_P(G,q)$ the angle from $F_P^+(G,q)$ to $F_P^-(G,q)$, and denote by $\ang_P(G,q)$ the measure of $\Ang_P(G,q)$ in radian. Obviously, if $q$ lies in some subfacet that parallels to $G$, then $\ang_P(G,q)<\pi$, otherwise, $\ang_P(G,q)=\pi$. We will denote by $E_P^+(G,q)$ the subfacet which parallels to $G$ and is contained in $F_P^+(G,q)$, but is not containing $q$ (Fig. \ref{FP}). We can also define $E_P^-(G,q)$ in the similar way.

\begin{figure}[!ht]
  \centering
    \includegraphics[scale=1]{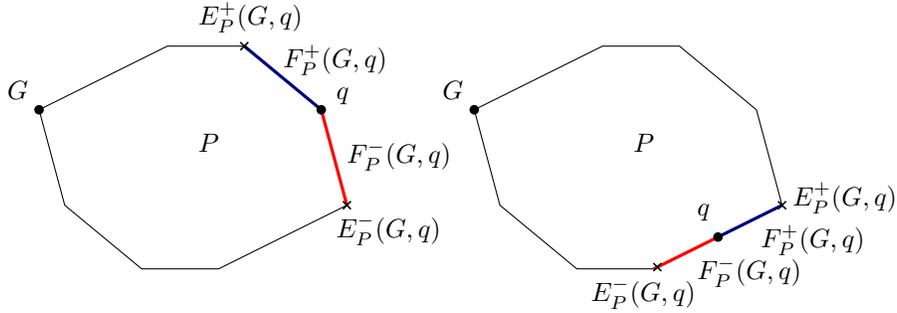}
   \caption{$F_P^+(G,q)$,~$F_P^-(G,q)$,~$E_P^+(G,q)$ and $E_P^-(G,q)$}\label{FP}
\end{figure}

\section{Some Lemmas}
For a positive real number $\varepsilon$ and a point $p$, denote by $B_\varepsilon(p)$ the closed ball with center $p$ and radius $\varepsilon$.

\begin{lem}\label{local to global}
Suppose that $K$ and $K'$ are convex bodies. If there exist a point $p\in \partial K\cap\partial K'$ and a positive real number $\varepsilon$ such that $B_\varepsilon(p)\cap\inte(K)\cap\inte(K')=\emptyset$, then $\inte(K)\cap\inte(K')=\emptyset$.
\end{lem}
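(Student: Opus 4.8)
The plan is to argue by contradiction, using the elementary fact from convex geometry that interior points of a convex body can be connected to any point of the body by a segment whose interior lies entirely in the interior of the body. This turns the hypothesis of local disjointness near $p$ into a global statement by sliding along such a segment toward $p$.

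First I would suppose, for contradiction, that $\inte(K)\cap\inte(K')\neq\emptyset$, and fix a point $z$ in this intersection. Since $p\in\partial K\cap\partial K'$, in particular $p\in K\cap K'$. The key ingredient is the standard convexity lemma: if $C$ is convex, $z\in\inte(C)$, and $p\in C$, then the point $p_t=(1-t)p+tz$ satisfies $p_t\in\inte(C)$ for every $t\in(0,1]$. Applying this with $C=K$ and with $C=K'$ simultaneously, I obtain $p_t\in\inte(K)\cap\inte(K')$ for all $t\in(0,1]$.

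Next I would observe that $p_t\to p$ as $t\to 0^+$, since $\|p_t-p\|=t\,\|z-p\|$. Hence there exists a sufficiently small $t_0\in(0,1]$ with $\|p_{t_0}-p\|<\varepsilon$, so that $p_{t_0}\in B_\varepsilon(p)$. Combining this with the previous step gives $p_{t_0}\in B_\varepsilon(p)\cap\inte(K)\cap\inte(K')$, contradicting the hypothesis that $B_\varepsilon(p)\cap\inte(K)\cap\inte(K')=\emptyset$. Therefore $\inte(K)\cap\inte(K')=\emptyset$, as claimed.

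The proof is short, so there is no single hard obstacle; the one point deserving care is the invocation of the convexity lemma, namely that the half-open segment $(p,z]$ stays in $\inte(K)$ (and in $\inte(K')$) even though $p$ itself lies only on the boundary. I would either cite this as a standard property of convex bodies or include a one-line justification: for $t\in(0,1]$, pick a small ball $B\subset\inte(C)$ around $z$; then the convex hull of $\{p\}\cup B$ contains a relatively open neighborhood of $p_t$, which forces $p_t\in\inte(C)$.
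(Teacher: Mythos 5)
Your proof is correct, but it takes a genuinely different and more elementary route than the paper's. The paper first invokes the separating hyperplane theorem for the convex sets $B_\varepsilon(p)\cap K$ and $B_\varepsilon(p)\cap K'$ (whose interiors are disjoint by hypothesis), obtaining a hyperplane $H$; then, assuming a point $p'\in\inte(K)\cap\inte(K')$, it notes that the segment from $p$ to $p'$ lies in $K\cap K'$, so its initial portion inside $B_\varepsilon(p)$ must lie in $H$, which forces $p'\in H$ and indeed pushes a whole ball $B_\delta(p')\subset\inte(K)\cap\inte(K')$ into $H$ --- impossible since a hyperplane has empty interior. You dispense with separation entirely: the accessibility lemma (if $z\in\inte(C)$ and $p\in C$, then the half-open segment $(p,z]$ lies in $\inte(C)$), applied to $K$ and $K'$ simultaneously, produces points of $\inte(K)\cap\inte(K')$ arbitrarily close to $p$, contradicting the local hypothesis directly. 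Both arguments hinge on the same segment $[p,z]$, but you extract the contradiction at the local end (inside $B_\varepsilon(p)$), whereas the paper transports it to the global end via $H$. Your route is shorter and avoids the separation theorem, whose application in the paper tacitly requires checking that $\inte(B_\varepsilon(p)\cap K)$ and $\inte(B_\varepsilon(p)\cap K')$ are disjoint; your one-line justification of the accessibility lemma is also sound, since $(1-t)p+t\,B_r(z)=B_{tr}(p_t)$ is contained in $C$ and hence $p_t\in\inte(C)$ --- the only cosmetic slip is calling this a ``relatively open'' neighborhood, when, $K$ being full-dimensional, it is a genuine open ball. Note finally that both proofs use only $p\in K\cap K'$ (via closedness of convex bodies), not the full strength of $p$ being a boundary point of each.
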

\begin{proof}
Since $B_\varepsilon(p)\cap K$ and $B_\varepsilon(p)\cap K'$ are convex, by applying the basic result of Convex and Discrete Geometry, we know that there is a hyperplane $H$ which separates $B_\varepsilon(p)\cap K$ and $B_\varepsilon(p)\cap K'$. Assume that $p'\in\inte(K)\cap\inte(K')$. By the convexity, the line segment $L$ between the point $p$ and the point $p'$ must lie in $K\cap K'$. Therefore, $L\cap B_\varepsilon(p)$ must be contained in the hyperplane $H$, and hence $p'\in H$. On the other hand, there is a positive real number $\delta$ such that $B_\delta(p')\subset \inte(K)\cap\inte(K')$. So $B_\delta(p')\subset H$, this is impossible.
\end{proof}

\begin{lem}
Let $D$ be a connected subset of $\mathbb{R}^n$, and let $k$ be a positive integer. Suppose that a family of convex bodies $\{K_1,K_2,\ldots\}$ is a $k$-fold tiling of $D$. We have that, for every $i\in\{1,2,\ldots\}$ and every point $q\in \partial K_i$, if $q$ is an interior point of $D$, then there must be a $j\in\{1,2,\ldots\}$ such that $q\in \partial K_j$ and $\inte(K_i)\cap \inte(K_j)=\emptyset$.
\end{lem}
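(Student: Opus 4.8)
The plan is to work entirely inside a small ball about $q$, to pass from the bodies to their tangent cones at $q$, and to reduce the global assertion to a covering estimate for cones that Lemma \ref{local to global} can then lift back to the bodies. Since $q\in\inte(D)$, first fix $\varepsilon_0>0$ with $B_{\varepsilon_0}(q)\subseteq D$; invoking local finiteness of the tiling (which is automatic in the translative case $K+X$ with $X$ discrete, the only case needed later) only finitely many of the $K_l$ meet $B_{\varepsilon_0}(q)$. Sort these finitely many bodies by the position of $q$: let $a$ be the number with $q\in\inte(K_l)$, let $B$ be the set of indices with $q\in\partial K_l$ (so $i\in B$), and after shrinking $\varepsilon_0$ discard the bodies with $q\notin K_l$, which miss a ball about $q$. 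For $l\in B$ let $C_l$ be the tangent cone of $K_l$ at $q$; convexity of $K_l$ forces $C_l$ to be a convex cone contained in a closed half-space. Because near $q$ each $K_l$ is approximated by $q+C_l$, the $k$-fold condition on $B_{\varepsilon_0}(q)$ localizes to the statement that, for a generic direction $v$, the number of $l\in B$ with $v\in\inte(C_l)$ equals $m:=k-a$; that is, the cones $\{C_l:l\in B\}$ form an $m$-fold tiling of $\mathbb{R}^n$. Note $m\ge 1$, since on its own interior side $K_i$ already contributes one unit of multiplicity beyond the $a$ bodies that contain $q$ in their interiors.

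The key observation is that it suffices to produce some $j\in B$ with $j\neq i$ and $\inte(C_i)\cap\inte(C_j)=\emptyset$. Indeed, two convex cones with a common apex and disjoint interiors are separated by a hyperplane through $q$, so for small $\varepsilon$ we would get $B_\varepsilon(q)\cap\inte(K_i)\cap\inte(K_j)=\emptyset$; since $q\in\partial K_i\cap\partial K_j$, Lemma \ref{local to global} then upgrades this to $\inte(K_i)\cap\inte(K_j)=\emptyset$, which is exactly the desired conclusion. Thus the entire problem reduces to the following purely conical question: in an $m$-fold tiling of $\mathbb{R}^n$ by convex cones with common apex, one of which is $C_i$, find another cone whose interior misses $\inte(C_i)$.

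To find such a cone I would argue by counting. Fix a supporting hyperplane $H$ of $C_i$ at the apex with outward normal $u$, so that $C_i\subseteq H^-$ and the open half-space $\inte(H^+)$ is disjoint from $\inte(C_i)$. On $\inte(C_i)$ the cones other than $C_i$ contribute total multiplicity exactly $m-1$, whereas on $\inte(H^+)$, where $C_i$ is absent, they must contribute the full $m$. The hard part will be to turn this imbalance into the existence of a cone $C_j$ ($j\neq i$) that never enters $\inte(C_i)$: a cone that both covers part of the far region $\inte(H^+)$ and intrudes into $\inte(C_i)$ must cross $\partial C_i$, and the intrusion budget $m-1$ caps how much of the far region the crossing cones can supply, so at least one cone is forced to stay on the far side and avoid $\inte(C_i)$ entirely — that cone is the desired $C_j$. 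Making this flux/covering estimate rigorous in general dimension is the main obstacle; I expect to handle it by slicing $\mathbb{R}^n$ with hyperplanes parallel to $H$ and inducting on the dimension, the circular (arc) case being the transparent base case. Once $C_j$ is produced, the reduction of the preceding paragraph together with Lemma \ref{local to global} completes the proof.
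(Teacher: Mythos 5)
Your local reduction is sound as far as it goes, and it is genuinely different in flavor from the paper's argument: passing to tangent cones $C_l$ at $q$ and observing that, for a generic direction, the cones of the bodies with $q\in\partial K_l$ form an $m$-fold tiling of $\mathbb{R}^n$ (with $m=k-a\geq 1$) is a correct localization, and your lifting step is even stronger than you claim --- since $K_l\subseteq q+C_l$, disjointness of $\inte(C_i)$ and $\inte(C_j)$ already gives $\inte(K_i)\cap\inte(K_j)=\emptyset$ globally, so Lemma \ref{local to global} is not really needed on your route. (The paper instead never introduces cones: it first restricts to a dense set $C_i$ of regular boundary points via the sets $A_i^j$, $B_i^j$, then picks a ray $R(q,u)$ exiting $K_i$ and a generic point $q'$ on it, pigeonholes among the exactly $k$ bodies covering $q'$ to find one with $q$ on its boundary, and concludes with the $A_i^j$/$B_i^j$ dichotomy plus Lemma \ref{local to global}.)

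The genuine gap is that your conical statement --- in an $m$-fold tiling of $\mathbb{R}^n$ by convex cones with common apex, some cone other than $C_i$ has interior disjoint from $\inte(C_i)$ --- is essentially the lemma itself in disguise (truncate each cone by a ball $B_R(q)$ and you have a $k$-fold tiling of a ball with $q\in\partial(C_i\cap B_R(q))$), so the reduction repackages the difficulty rather than resolving it, and the "flux/budget" heuristic you offer for it does not work as stated. The multiplicity bound $m-1$ on $\inte(C_i)$ is pointwise, not a mass bound: arbitrarily many cones can each intrude into $\inte(C_i)$ with arbitrarily small solid angle, at different places along $\partial C_i$, while each of them covers a large portion of the far half-space $\inte(H^+)$, so the "intrusion budget" in no way caps the far-side supply, and no cone is forced to stay on the far side by this counting alone. (Already in the plane with $C_i$ a half-plane and $m=2$, ruling out the bad configuration requires a finer argument matching the jump points of the two monotone counting functions attached to the two endpoint directions of $C_i$'s arc --- it is not "transparent.") The proposed rescue by slicing also has a structural problem: hyperplanes parallel to $H$ miss the apex, so their slices are $m$-fold tilings by convex sets that are no longer cones with a common apex and the inductive hypothesis does not apply; slicing through the apex preserves the cone structure, but interior-disjointness in a single slice does not lift to $\mathbb{R}^n$, and the witness cone $C_j$ may vary from slice to slice. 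Until the conical claim is proved, the heart of the lemma is missing; the paper's dense-regular-point and ray/pigeonhole argument is precisely the machinery that substitutes for it.
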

\begin{proof}
 For $i\neq j$, let
\begin{equation*}
A_i^j=\{p\in\partial K_i:~ B_\varepsilon(p)\cap \partial K_j\cap\inte(K_i)=\emptyset,~\text{for some}~\varepsilon>0\},
\end{equation*}
and
\begin{equation*}
B_i^j=\{p\in\partial K_i\setminus A_i^j:~ B_\varepsilon(p)\cap \partial K_j\subset K_i,~\text{for some}~\varepsilon>0\}.
\end{equation*}
We note that, if $p\in \partial K_i\setminus(A_i^j\cup B_i^j)$, then $p$ must lie in $\partial K_j$ and for all $\varepsilon>0$, we have $B_\varepsilon(p)\cap\partial K_j \cap \inte(K_i)\neq\emptyset$ and $(B_\varepsilon(p)\cap\partial K_j)\setminus K_i\neq\emptyset$ (Fig. \ref{ABij}). Obviously, when $K_i\cap K_j=\emptyset$, we have $A_i^j=\partial K_i$ and $B_i^j=\emptyset$. We note that for a fixed $i$, there are finitely many $j$ such that $K_i\cap K_j\neq \emptyset$. Hence, there are finitely many $j$ such that $A_i^j\cup B_i^j\neq \partial K_i$. Let
\begin{equation*}
C_i=\bigcap_{j\neq i}{A_i^j\cup B_i^j}.
\end{equation*}
By considering $(n-1)$-dimensional Lebesgue measure, one can show that the closure of $C_i$ is $\partial K_i$. Therefore, to prove the lemma, it suffices to show that the statement is true for all $q\in C_i$.
\begin{figure}[!ht]
  \centering
  \subfigure[$K_i$ and $K_j$]{
    \includegraphics[scale=.8]{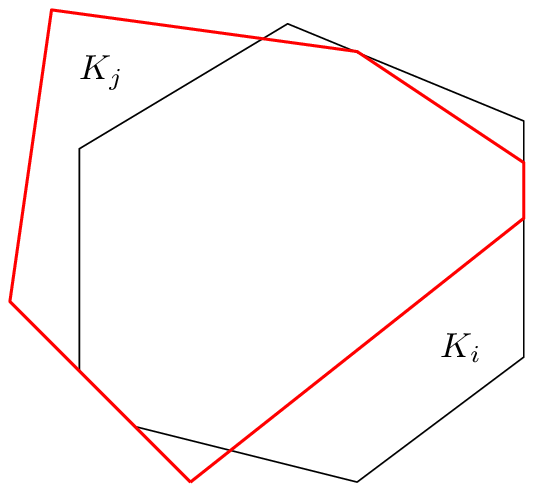}}
    \subfigure[points $p_1,p_2,p_3,p_4,p_5,p_6$]{
    \includegraphics[scale=.8]{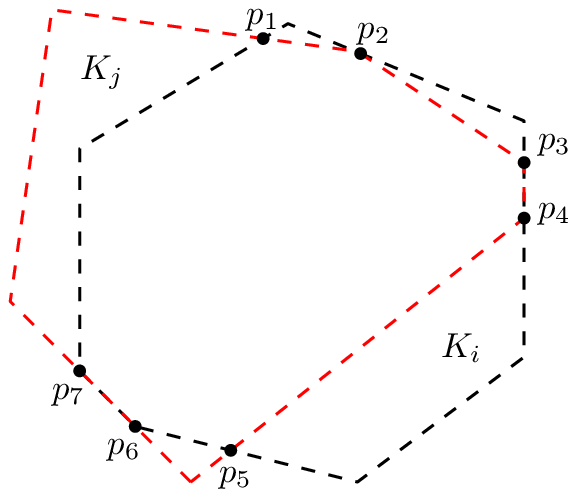}}
   \caption{$A_i^j=\partial K_i\setminus\{p_1,p_2,p_3,p_4,p_5\}$, $B_i^j=\{p_2,p_3,p_4\}$ and $\partial K_i\setminus(A_i^j\cup B_i^j)=\{p_1,p_5\}$}\label{ABij}
\end{figure}

Suppose that $q\in C_i\cap \inte(D)$. Denote by $\mathcal{F}(q)$ the collection of convex bodies $K_j$ containing the point $q$. We note that $\mathcal{F}(q)$ must be finite. Let
\begin{equation*}
\mathcal{F}'(q)=\{K_1,K_2,\ldots\}\setminus \mathcal{F}(q).
\end{equation*}
It is not hard to see that, there exists a positive real number $\varepsilon_0$ such that $B_{\varepsilon_0}(q)\cap K_j=\emptyset$, for any $K_j\in\mathcal{F}'(q)$. Since $q\in C_i\cap\inte(D)$, we may assume, without loss of generality, that $B_{\varepsilon_0}(q)\subset \inte(D)$ and for each $j$, we have $B_{\varepsilon_0}(q)\cap \partial K_j\cap\inte(K_i)=\emptyset$ or $B_{\varepsilon_0}(q)\cap \partial K_j\subset K_i$. Furthermore, we may also assume that for all $j$, both $B_{\varepsilon_0}(q)\cap K_j$ and $B_{\varepsilon_0}(q)\setminus \inte(K_j)$ are connected. For a unit vector $u$, we denote by $R(q,u)$ the ray parallel to $u$ and starting at $q$.
One can find a unit vector $u$ that satisfies
\begin{enumerate}
\item[(i)] $R(q,u)\cap K_i=\{q\}$ and $R(q,-u)\cap\inte(K_i)\neq\emptyset$,
\item[(ii)] there is a point $q'\in R(q,u)\cap B_{\varepsilon_0}(q)$ such that $q'\notin\partial K_j$ for all $j=1,2,\ldots$.
\end{enumerate}
Since $\{K_1,K_2,\ldots\}$ is a $k$-fold tiling of $D$ and $q'\in B_{\varepsilon_0}(q)\subset\inte(D)$, there exist exactly $k$ convex bodies $K_{i_1},\ldots,K_{i_k}$ such that $q'\in \inte(K_{i_j})$, $j=1,\ldots,k$.
We note that $\{K_{i_1},\ldots,K_{i_k}\}\subset\mathcal{F}(q)$ and $K_i\notin \{K_{i_1},\ldots,K_{i_k}\}$. Denote by $\tilde{\mathcal{F}}(q)$ the collection of convex bodies $K_j$ which contain the point $q$ as an interior point. If $\{K_{i_1},\ldots,K_{i_k}\}\subset\tilde{\mathcal{F}}(q)$, then it is easy to see that $K_{i_1}\cap\cdots\cap K_{i_k}\cap K_i$ must have an interior point, which is impossible, since $\{K_1,K_2,\ldots\}$ is a $k$-fold tiling. Now we suppose that $K_{i_{j_0}}\notin \tilde{\mathcal{F}}(q)$, for some $j_0\in\{1,\ldots,k\}$. It is clear that $q\in\partial K_{i_{j_0}}$. We will show that $\inte(K_i)\cap\inte(K_{i_{j_0}})=\emptyset$. Since $q\in C_i$, we have that $q\in A_i^{j_0}$ or $q\in B_i^{j_0}$. Recall that for each $j$, we have $B_{\varepsilon_0}(q)\cap \partial K_j\cap\inte(K_i)=\emptyset$ or $B_{\varepsilon_0}(q)\cap \partial K_j\subset K_i$. If $q\in B_i^{j_0}$, then $B_{\varepsilon_0}(q)\cap \partial K_{j_0}\subset K_i$. From this, one can deduce that $B_{\varepsilon_0}(q)\cap K_{i_{j_0}}\subset K_i$ which is impossible, since $q'$ is in $B_{\varepsilon_0}(q)\cap K_{i_{j_0}}$ but is not in $K_i$. Therefore $q\in A_i^{j_0}$. It is not hard to see that $\partial K_{i_{j_0}}$ divides $B_{\varepsilon_0}(q)$ into two parts, where one of them dose not contain any point of $\inte(K_i)$, we denote this part by $B'$. Since $q'\in \inte(K_{i_{j_0}})$, it is obvious that $q'$ and $B_{\varepsilon_0}(q)\cap K_{i_{j_0}}$ must be contained in the same part. Because $q'\in R(q,u)$, by the property (i) of the vector $u$, we see that $q'$ must lie in $B'$. Therefore $B_{\varepsilon_0}(q)\cap K_{i_{j_0}}$ is contained in $B'$, and hence  $B_{\varepsilon_0}(q)\cap K_{i_{j_0}}\cap \inte(K_i)=\emptyset$. By Lemma \ref{local to global}, we obtain $\inte(K_i)\cap\inte(K_{i_{j_0}})=\emptyset$. This completes the proof.
\end{proof}

\begin{cor}\label{touch cor}
Let $X$ be a discrete multisubset of $\mathbb{R}^n$ containing the origin, and let $k$ be a positive integer. Suppose that $P$ is a centrally symmetric convex polytope with centrally symmetric facets, and $G$ is its subfacet. Let $q$ be a point on a facet in $\mathcal{B}_P(G)$. If $P+X$ is a $k$-fold tiling, then  $\overline{\partial}X_P(G,F_P^+(G,q),q)$ and $\overline{\partial}X_P(G,F_P^-(G,q),q)$ are not empty.
\end{cor}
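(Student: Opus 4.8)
The plan is to prove the claim for $F_P^+(G,q)$; the argument for $F_P^-(G,q)$ is identical after reflecting through $S(G,q)$. Since $q$ lies in a facet of $\mathcal{B}_P(G)$, there is a facet $\hat{F}$ of $P$ in this belt that contains $q$ and whose $+$-part is $F_P^+(G,q)$. Because $\hat{F}$ is $(n-1)$-dimensional while $S(G,q)$ is only $(n-2)$-dimensional, I can choose a sequence of points $p_m\to q$ with each $p_m$ in the relative interior of $\hat{F}$, lying on the $+$-side of $S(G,q)$, so that $p_m\in F_P^+(G,q)$ and $p_m\notin S(G,q)$. Applying the preceding lemma (with $D=\mathbb{R}^n$, which has no boundary, and with the tile $P=P+0$ playing the role of $K_i$) to each boundary point $p_m$, I obtain $x_m\in X$ with $p_m\in\partial(P+x_m)$ and $\inte(P)\cap\inte(P+x_m)=\emptyset$.

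The key step is to upgrade $\inte(P)\cap\inte(P+x_m)=\emptyset$ to $\inte(P)\cap(P+x_m)=\emptyset$, i.e.\ to show $x_m\in\overline{\partial}X_P(p_m)$. Since $p_m$ lies in the relative interior of the facet $\hat{F}$, the tangent cone of $P$ at $p_m$ is exactly the supporting half-space $H^+$ bounded by $H=\mathrm{aff}(\hat{F})$, so $\inte(P)$ agrees locally with the open half-space $H^{++}$. The disjointness of the interiors then forces the tangent cone of $P+x_m$ at $p_m$ to lie in the closed opposite half-space $H^-$; by convexity $P+x_m\subset H^-$ entirely, and hence $\inte(P)\cap(P+x_m)=\emptyset$ because $\inte(P)\subset H^{++}$. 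In particular $x_m\neq 0$ and $x_m\in\overline{\partial}X_P(p_m)$. Moreover each $x_m$ satisfies $x_m\in p_m-P$, a bounded set, so by the discreteness of $X$ only finitely many distinct values occur; passing to a subsequence I may assume $x_m=x_0$ is constant.

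It remains to check that this single $x_0$ lies in $\overline{\partial}X_P(G,F_P^+(G,q),q)$. Taking limits in $p_m\in P+x_0$ gives $q\in P+x_0$, and since $\inte(P)\cap(P+x_0)=\emptyset$ while every neighbourhood of $q\in\partial P$ meets $\inte(P)$, we get $q\in\partial(P+x_0)$; thus $x_0\in\overline{\partial}X_P(q)$. Finally, each $p_m$ is a point of $F_P^+(G,q)\cap(P+x_0)$ with $p_m\notin S(G,q)$, so $F_P^+(G,q)\cap(P+x_0)$ is not contained in $S(G,q)$, which is exactly the condition $S(G,q)\cap F_P^+(G,q)\cap(P+x_0)\neq F_P^+(G,q)\cap(P+x_0)$ defining membership in $\overline{\partial}X_P(G,F_P^+(G,q),q)$. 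The same construction on the $-$-side produces an element of $\overline{\partial}X_P(G,F_P^-(G,q),q)$, completing the proof.

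The main obstacle I anticipate is the upgrade step in the second paragraph: the preceding lemma only yields disjoint interiors, whereas membership in $\overline{\partial}X_P$ requires $P+x_m$ to avoid $\inte(P)$ altogether. Handling this cleanly is what forces the choice of $p_m$ in the relative interior of a facet, so that the tangent cone of $P$ is a genuine half-space; a point on a lower-dimensional face would leave room for $P+x_m$ to approach from a transversal direction and would break the argument.
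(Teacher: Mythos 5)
Your proof is correct and is essentially the derivation the paper intends: Corollary \ref{touch cor} is stated without proof as an immediate consequence of the preceding lemma, and your argument---apply that lemma at boundary points $p_m\in F_P^{\pm}(G,q)$ near $q$ but off $S(G,q)$, use discreteness of $X$ to extract a constant subsequence $x_0$, and pass to the limit to verify $x_0\in\overline{\partial}X_P(G,F_P^{\pm}(G,q),q)$---just makes that implicit reasoning explicit. One remark: the ``upgrade'' you single out as the main obstacle is automatic for convex bodies and does not require $p_m$ to lie in the relative interior of a facet, since $\inte(P)\cap\inte(P+x_m)=\emptyset$ already yields a hyperplane separating the two translates, with $P+x_m$ in one closed half-space and $\inte(P)$ strictly inside the other, whence $\inte(P)\cap(P+x_m)=\emptyset$ at once (the same separation device as in Lemma \ref{local to global}).
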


We will denote by $X_P^+(G,q)$ and $X_P^-(G,q)$ the sets $\overline{\partial}X_P(G,F_P^+(G,q),q)$ and $\overline{\partial}X_P(G,F_P^-(G,q),q)$, respectively. For example, in Fig. \ref{XP}, we have $X_P^+(G,q)=\{x_1\}$ and $X_P^-(G,q)=\{x_3\}$.

\section{Proof of Main Theorem}

\begin{lem}\label{main lem}
If a convex body $K$ is a twofold translative tile, then $K$ is a centrally symmetric polytope with centrally symmetric facets, such that each belt of $K$ contains four or six facets.
\end{lem}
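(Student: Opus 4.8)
The plan is to deduce the lemma from McMullen's characterization (Theorem \ref{mcmullen thm}) by supplying the one piece that is not yet available. By Theorem \ref{gravin thm} we already know that a twofold translative tile $K$ is a centrally symmetric polytope $P$ with centrally symmetric facets, so everything reduces to verifying the belt condition: that every belt $\mathcal{B}_P(G)$ contains exactly four or six facets. First I would record the basic geometry of a belt. Since $P$ and its facets are centrally symmetric, opposite facets of the belt are paired and the belt has an even number $2m$ of facets; projecting orthogonally to the subfacet direction of $G$ carries the belt onto the edges of a centrally symmetric $2m$-gon $Q$ in the plane $\Pi$ orthogonal to $G$. The subfacets of $P$ parallel to $G$ correspond to the vertices of $Q$, and for $q$ in the relative interior of such a subfacet the interior angle of $Q$ at the corresponding vertex is exactly $\ang_P(G,q)<\pi$. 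The whole task becomes to show $m\le 3$.

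The main tool will be a local angular balance carried out in $\Pi$. Write the twofold tiling as $P+X$ with $0\in X$, fix a point $q$ in the relative interior of a subfacet parallel to $G$, and intersect the tiling with a small disk in $\Pi$ centered at $q$. Only the finitely many copies $P+x$ with $x\in X_P(q)$ meet this disk; each appears either as a full neighborhood (when $x\in\overset{\circ}{X}_P(q)$) or as an angular wedge whose opening is $\pi$ if $q$ lies in the relative interior of a facet of $P+x$, and is less than $\pi$ if $q$ lies on a subfacet of $P+x$ parallel to $G$. Because $P+X$ is a twofold tiling, the punctured disk is covered with multiplicity exactly two, so the openings of all these wedges, together with a contribution of $2\pi$ from each copy in $\overset{\circ}{X}_P(q)$, must add up to $2\cdot 2\pi=4\pi$. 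Corollary \ref{touch cor} then guarantees that on each of the two sides $F_P^+(G,q)$ and $F_P^-(G,q)$ there is a copy in $X_P^+(G,q)$, respectively $X_P^-(G,q)$, touching $P$ flatly along the belt with disjoint interior; these are precisely the copies that continue the belt past $q$.

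I would then use these touching copies to set up a walk around the belt. Starting at the subfacet $G$ with interior angle $\ang_P(G,q)<\pi$, the copies supplied by Corollary \ref{touch cor} attach along $F_P^+(G,q)$ and $F_P^-(G,q)$, and since every copy is a translate of $P$, each such copy presents the next vertex angle of $Q$; iterating produces the full cyclic sequence of the $2m$ interior angles of $Q$. The twofold multiplicity forces the wedges meeting at each vertex to cover $\Pi$ locally exactly twice, so that the budget of $4\pi$ at every subfacet pins down how much the belt is allowed to turn at each step. Carrying out this accounting around the cycle should show that the turning angles can close up consistently only when $Q$ is a parallelogram ($2m=4$) or a centrally symmetric hexagon ($2m=6$); this yields the belt condition and, via Theorem \ref{mcmullen thm}, the lemma.

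The hard part will be this last closure step, and it is exactly where the value $k=2$ must be used in an essential way. For general $k$ the analogous bound is false: the octagon of Fig. \ref{octagon} is a sevenfold tile whose belt has eight facets, so any correct argument must exploit that the available multiplicity around each subfacet is only two. The delicate point is to control precisely which copies can touch $q$ and with which wedge openings, and to show that once Corollary \ref{touch cor} forces genuine touching copies on both sides, a $4\pi$ budget simply cannot accommodate a centrally symmetric polygon with $2m\ge 8$ edges. Establishing this combinatorial impossibility is where essentially all the work lies, and it is here that I expect the modification of McMullen's method to be required.
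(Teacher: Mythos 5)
Your reduction via Theorem \ref{gravin thm}, the projection to the plane orthogonal to $G$, the $4\pi$ angular budget coming from twofold multiplicity, and the use of Corollary \ref{touch cor} to produce touching copies on both sides all match the paper's setup. But the proposal stops exactly where the proof has to start: the closure step you defer (``carrying out this accounting around the cycle should show\ldots'') is the entire content of the lemma, and the paper does not prove it by a global walk around the belt at all. It argues purely locally at a point $q\in G$ chosen to avoid every translate $K(G)+x$ of the non-belt facets --- a genericity condition you omit, and without which your planar-wedge picture at $q$ is invalid, since copies could meet $q$ along facets outside the belt. Assuming $m\geq 4$, the paper first establishes three facts absent from your outline: $\overset{\circ}{X}_K(q)=\emptyset$, $\dot{\partial}X_K(G,q)=\emptyset$, and $X$ is a genuine set rather than a multiset. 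The quantitative engine is that the $m$ distinct belt angles sum to $(m-1)\pi$ with each angle less than $\pi$, so any two non-opposite angles sum to more than $\pi$ and any three to more than $2\pi$; hence an interior copy (or an opposite-angle touching copy, or a repeated translate) together with the two copies from Corollary \ref{touch cor} creates a point of multiplicity three. Your $4\pi$ budget alone rules out none of these configurations.

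With those facts in hand the paper splits into two cases rather than closing a cycle. For $m\geq 6$: twofoldness forces at least five copies at $q$ (one angle strictly below $\pi$, the rest at most $\pi$, total $4\pi$), yet five distinct non-opposite angles sum to more than $(m-1)\pi-(m-5)\pi=4\pi$, a contradiction. For $m=4$ or $5$ the argument is genuinely different from anything you sketch: one builds a chain $x_1\in X_K^+(G,q)$, $x_2\in X_{K+x_1}^+(G,q)$, $x_3\in X_{K+x_2}^+(G,q)$, shows $\ang_{K+x_1}(G,q)+\ang_{K+x_2}(G,q)<2\pi$ while the three angles sum to more than $2\pi$, deduces $E_K^+(G,q)\cap\inte(K+x_3)\neq\emptyset$, and then derives the contradiction at a \emph{second} generically chosen point $q'$ on the neighboring subfacet, where $x_3$ would lie in $\overset{\circ}{X}_K(q')$, which is empty by the same argument as at $q$. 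Your proposed turning-angle closure around the whole $2m$-gon is not obviously workable: as you observe yourself, a closure condition must be sensitive to the multiplicity or it cannot distinguish $k=2$ from the sevenfold octagon, so the step you label as ``where essentially all the work lies'' is not a routine computation left to the reader but the actual theorem, and it remains unproved in your proposal.
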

\begin{proof}
By Theorem \ref{gravin thm}, we know that $K$ is a centrally symmetric polytope with centrally symmetric facets.

Let $G$ be an arbitrary subfacet of $K$. Recall that we denote by $\mathcal{B}_K(G)$ the belt of $K$ determined by $G$.
Let $\mathcal{B}_K(G)$ have $m$ pairs of opposite facets. We will show that $m\leq 3$. To do this, we shall suppose that $m\geq 4$, and obtain a contradiction. Denote by $K(G)$ the union of the facets which are not contained in $\mathcal{B}_K(G)$.

Suppose that $K+X$ is a twofold translative tiling, where $X$ is a multisubset of $\mathbb{R}^n$. Without loss of generality, we may assume that $0\in X$. It is not hard to see that, we can choose a point $q\in G$ to lie in none of $K(G)+x$, where $x\in X$.

First, we will show that $\overset{\circ}{X}_K(q)=\emptyset$. If there is a $x\in\overset{\circ}{X}_K(q)$, then by Corollary \ref{touch cor}, there exist $x_1\in X_K^+(G,q)$ and $x_2\in X_K^-(G,q)$. Obviously, we have $x_1\neq x_2$. Since $m\geq 4$, it is easy to see that both $\ang_K(G,q)+\ang_{K+x_1}(G,q)$ and $\ang_K(G,q)+\ang_{K+x_2}(G,q)$ are greater than $(m-1)\pi-(m-2)\pi=\pi$. Therefore $\Ang_{K+x_1}(G,q)$ and $\Ang_{K+x_2}(G,q)$ are not opposite angles, and hence $\ang_K(G,q)+\ang_{K+x_1}(G,q)+\ang_{K+x_2}(G,q)$ is greater than $(m-1)\pi+(m-3)\pi=2\pi$ (Fig. \ref{sum of non opposite angle}). This can be deduced that $(K+x)\cap(K+x_1)\cap(K+x_2)$ has an interior point which is impossible, since $K+X$ is a twofold tiling.

\begin{figure}[!ht]
  \centering
    \includegraphics[scale=.85]{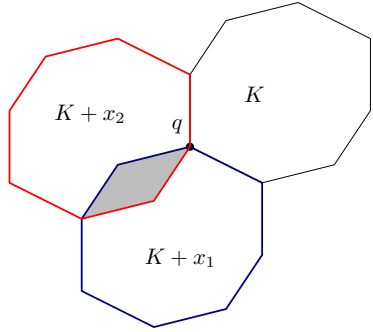}
   \caption{$\ang_K(G,q)+\ang_{K+x_1}(G,q)+\ang_{K+x_2}(G,q)>2\pi$}\label{sum of non opposite angle}
\end{figure}

We assert that $\dot{\partial} X_K(G,q)=\emptyset$. Suppose that $x\in \dot{\partial} X_K(G,q)$. Since $K$ is centrally symmetric, it is not hard to see that $\Ang_K(G,q)$ and $\Ang_{K+x}(G,q)$ are opposite angles (Fig. \ref{opposite angle}). By Corollary \ref{touch cor}, we can choose $x_1\in X_K^+(G,q)$ and $x_2\in X_K^-(G,q)$. Similar to the above argument, one obtains that $(K+x)\cap(K+x_1)\cap(K+x_2)$ has an interior point which is a contradiction.

\begin{figure}[!ht]
  \centering
    \includegraphics[scale=.85]{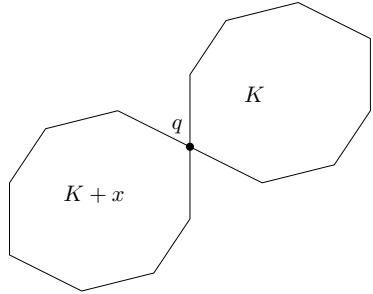}
   \caption{$\Ang_K(G,q)$ and $\Ang_{K+x}(G,q)$}\label{opposite angle}
\end{figure}

Now we will show that $X$ must be a usual set (not a multiset). If not, then we may assume that $0$ has multiplicity $2$. By Corollary \ref{touch cor}, one can choose $x_1\in X_K^+(G,q)$ and $x_2\in X_{K+x_1}^+(G,q)$ (see Fig. \ref{not multiset}). From the above discussion, we know that $\Ang_K(G,q),~\Ang_{K+x_1}(G,q)$ and $\Ang_{K+x_2}(G,q)$ cannot be opposite angles, hence $\ang_K(G,q)+\ang_{K+x_1}(G,q)+\ang_{K+x_2}(G,q)$ is greater than $2\pi$. This implies that $K\cap(K+x_2)$ has an interior point. We note that $x_2\neq 0$, and hence we obtain a contradiction.

\begin{figure}[!ht]
  \centering
    \includegraphics[scale=.9]{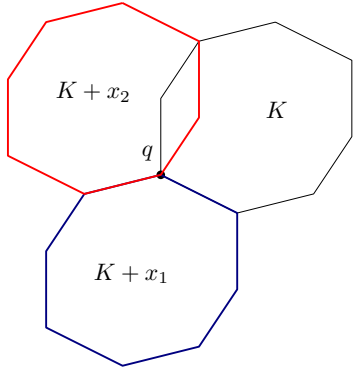}
   \caption{$x_1\in X_K^+(G,q)$ and $x_2\in X_{K+x_1}^+(G,q)$}\label{not multiset}
\end{figure}

We shall divide the remaining proof into the following two cases:
\begin{enumerate}
\item[(i)] \emph{Case $m\geq 6$:}~Since $\overset{\circ}{X}_K(q)=\emptyset$, we obtain $X_K(q)=\partial X_K(q)$. We note that $\ang_K(G,q)<\pi$ and for each $x\in\partial X_K(q)\setminus\{0\}$, we have $\ang_{K+x}(G,q)\leq\pi$. Because $K+X$ is a twofold tiling, so the cardinality of $\partial X_K(q)$ must be greater than $4$. On the other hand, since $m\geq 6$, we know that the sum of five (distinct) non-opposite angles is greater than $(m-1)\pi-(m-5)\pi=4\pi$. Therefore, the cardinality of $\partial X_K(q)$ cannot be greater than $4$, this is a contradiction.
\item[(ii)] \emph{Case $m=4$ or $5$:}~Similar to the above, we have that the cardinality of $\partial X_K(q)$ is greater than $4$. If there are two points $x,x'\in \partial X_K(q)$ such that $q$ lies in the relative interior of a facet of $K+x$ and also lies in the relative interior of a facet of $K+x'$, then $\ang_{K+x}(G,q)=\ang_{K+x'}(G,q)=\pi$, and hence $\sum_{z\in\partial X_K(q)}{\textstyle{\ang_{K+z}}(G,q)}$ is greater than
    $\textstyle{\ang_{K+x}(G,q)}+\textstyle{\ang_{K+x'}(G,q)}+(m-1)\pi-(m-3)\pi=4\pi$, which is impossible. Therefore, there is at most one point $x\in\partial X_K(q)$ such that $p$ lies in the relative interior of a facet of $K+x$. We choose $x_1\in X_K^+(G,q),~x_2\in X_{K+x_1}^+(G,q)$ and $x_3\in X_{K+x_2}^+(G,q)$. We note that $\ang_{K+x_1}+\ang_{K+x_2}<2\pi$ and $\ang_{K+x_1}(G,q)+\ang_{K+x_2}(G,q)+\ang_{K+x_3}(G,q)>2\pi$. Hence, one can prove that $E_K^+(G,q)\cap\inte(K+x_3)\neq \emptyset$ (Fig. \ref{m equal to 5}). Now we choose $q'\in E_K^+(G,q)\cap\inte(K+x_3)$ to lie in none of $K(G)+x$, where $x\in X$. Obviously, $x_3\in\overset{\circ}{X}_K(q')$. On the other hand, by using the same argument as the proof of $\overset{\circ}{X}_K(q)=\emptyset$, one obtains $\overset{\circ}{X}_K(q')=\emptyset$. This is a contradiction.
    \begin{figure}[!ht]
  \centering
    \includegraphics[scale=1]{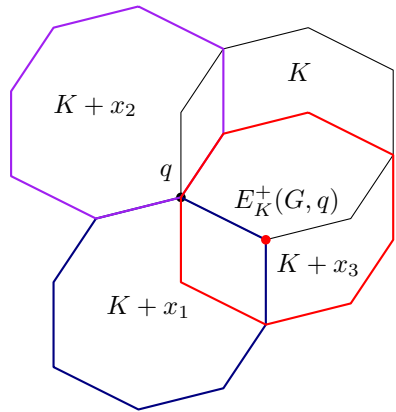}
   \caption{$E_K^+(G,q)\cap\inte(K+x_3)\neq \emptyset$}\label{m equal to 5}
\end{figure}
\end{enumerate}
Above all, we obtain $m\leq 3$.
\end{proof}

By Lemma \ref{main lem} and Theorem \ref{mcmullen thm}, one obtain Theorem \ref{main thm}.


\begin{thebibliography}{10}
\bibitem{venkov}
Venkov, B.A.: On a class of Euclidean polyhedra. Vestnik Leningrad Univ. Ser. Math. Fiz. Him. \textbf{9}, 11--31 (1954).

\bibitem{mcmullen}
McMullen, P.: Convex bodies which tile space by translation. Mathematika \textbf{27}, 113--121 (1980).

\bibitem{gravin}
Gravin, N., Robins, S., Shiryaev, D.: Translational tilings by a polytope, with multiplicity. Combinatorica \textbf{32}, 629--648 (2012). doi: 10.1007/s00493-012-2860-3

\end{thebibliography}
\end{document}